\nonstopmode \numberwithin{equation}{section}
\newtheorem{thm}[equation]{Theorem}
\newtheorem{cor}[equation]{Corollary}
\newtheorem{lem}[equation]{Lemma}
\newtheorem{prop}[equation]{Proposition}
\newtheorem{conj}{Conjecture}
\theoremstyle{definition}
\newtheorem{prob}[equation]{Problem}
\newtheorem{rem}{Remark}[section]
\newcounter{minutes}\setcounter{minutes}{\time}
\newcounter{hours}\setcounter{hours}{\time}
\newcounter {own}
\def\theown {\thesection       .\arabic{own}}
\newenvironment{pf}[1][]{%
 \vskip 3mm
 \noindent
 \ifthenelse{\equal{#1}{}}%
  {{\slshape Proof. }}%
  {{\slshape #1.} }%
 }%
{\qed\bigskip}
\newcounter{alphabet}
\newcounter{tmp}
\def\be{\begin{equation}}
\def\ee{\end{equation}}
\newcommand{\bee}{\begin{enumerate}}
\newcommand{\eee}{\end{enumerate}}
\newcommand{\blem}{\begin{lem}}
\newcommand{\elem}{\end{lem}}
\newcommand{\bthm}{\begin{thm}}
\newcommand{\ethm}{\end{thm}}
\newcommand{\bcor}{\begin{cor}}
\newcommand{\ecor}{\end{cor}}
\newcommand{\beg}{\begin{examp}}
\newcommand{\eeg}{\end{examp}}
\newcommand{\begs}{\begin{examples}}
\newcommand{\eegs}{\end{examples}}
\newcommand{\bdefe}{\begin{defin}}
\newcommand{\edefe}{\end{defin}}
\newcommand{\bprob}{\begin{prob}}
\newcommand{\eprob}{\end{prob}}
\newcommand{\bei}{\begin{itemize}}
\newcommand{\eei}{\end{itemize}}
\newcommand{\bcon}{\begin{conj}}
\newcommand{\econ}{\end{conj}}
\newcommand{\bcons}{\begin{conjs}}
\newcommand{\econs}{\end{conjs}}
\newcommand{\bprop}{\begin{prop}}
\newcommand{\eprop}{\end{prop}}
\newcommand{\br}{\begin{rem}}
\newcommand{\er}{\end{rem}}
\newcommand{\brs}{\begin{rems}}
\newcommand{\ers}{\end{rems}}
\newcommand{\bo}{\begin{obser}}
\newcommand{\eo}{\end{obser}}
\newcommand{\bos}{\begin{obsers}}
\newcommand{\eos}{\end{obsers}}
\newcommand{\bpf}{\begin{pf}}
\newcommand{\epf}{\end{pf}}
\newcommand{\ba}{\begin{array}}
\newcommand{\ea}{\end{array}}
\newcommand{\beq}{\begin{eqnarray}}
\newcommand{\beqq}{\begin{eqnarray*}}
\newcommand{\eeq}{\end{eqnarray}}
\newcommand{\eeqq}{\end{eqnarray*}}
\begin{document}

\title{On Coefficient Estimates of Negative Powers and Inverse Coefficients for Certain Starlike Functions}

\author{Md Firoz Ali}
\address{Md Firoz Ali,
Department of Mathematics,
Indian Institute of Technology Kharagpur,
Kharagpur-721 302, West Bengal, India.}
\email{ali.firoz89@gmail.com}

\author{A. Vasudevarao}
\address{A. Vasudevarao,
Department of Mathematics,
Indian Institute of Technology Kharagpur,
Kharagpur-721 302, West Bengal, India.}
\email{alluvasu@maths.iitkgp.ernet.in}

\subjclass[2010]{Primary 30C45, 30C50}
\keywords{Univalent, starlike, meromorphic functions; subordination, coefficient bounds and inverse coefficient bounds.}

\def\thefootnote{}
\footnotetext{ {\tiny File:~\jobname.tex,
printed: \number\year-\number\month-\number\day,
          \thehours.\ifnum\theminutes<10{0}\fi\theminutes }
} \makeatletter\def\thefootnote{\@arabic\c@footnote}\makeatother

\begin{abstract}
For $-1\le B<A\le 1$, let $\mathcal{S}^*(A,B)$ denote the class of normalized analytic functions $f(z)= z+\sum_{n=2}^{\infty}a_n z^n$ in $|z|<1$ which satisfy the subordination relation $zf'(z)/f(z)\prec (1+Az)/(1+Bz)$ and $\Sigma^*(A,B)$ be the corresponding class of meromorphic functions in $|z|>1$. For $f\in\mathcal{S}^*(A,B)$ and $\lambda>0$, we shall estimate the absolute value of the Taylor coefficients $a_n(-\lambda,f)$ of the analytic function $(f(z)/z)^{-\lambda}$. Using this we shall determine the coefficient estimate for inverses of functions in the classes $\mathcal{S}^*(A,B)$ and $\Sigma^*(A,B)$.
\end{abstract}

\thanks{}

\maketitle
\pagestyle{myheadings}
\markboth{Md Firoz Ali and A. Vasudevarao }{Coefficient Estimates for Negative Powers and Inverses of Functions}

\section{Introduction}

Let $\mathcal{A}$ denote the family of analytic functions $f$ in the unit disk $\mathbb{D}:=\{z\in\mathbb{C}:\, |z|<1\}$ of the form
\begin{equation}\label{p3_i001}
f(z)= z+\sum_{n=2}^{\infty}a_n z^n.
\end{equation}
Let $\mathcal{S}$ denote the class of univalent (one-to-one) functions in $\mathcal{A}$.
For $-1\le B<A\le 1$, let $\mathcal{S}^*(A,B)$ denote the class of functions $f\in\mathcal{A}$ satisfying
\begin{equation}\label{p3_i005}
\frac{z f'(z)}{f(z)} \prec \frac{1+A z}{1+B z} \quad\mbox{ for } z\in\mathbb{D},
\end{equation}
where $\prec$ stands for the usual subordination for analytic functions in $\mathbb{D}$. This class was introduced by Janowski \cite{Janowski-1973} and extensively studied in the literature (see \cite{Goel-Mehrok-1981,Silverman-Silvia-1985} and references there in). For $-1\le B<A\le 1$, functions in $S^*(A,B)$ are known to be in $\mathcal{S}$. Let $\mathbb{\widetilde{C}}=\mathbb{C}\cup\{\infty\}$ and $\Sigma$ denote the class of meromorphic univalent functions $g$ defined in $\Delta:=\{z\in\mathbb{\widetilde{C}}: |z|>1\}$ of the form
\begin{equation}\label{p3_i010}
g(z)= z\left(1+\sum_{n=1}^{\infty}b_nz^{-n}\right) \quad \mbox{ for } 1<|z|<\infty.
\end{equation}
For $-1\le B<A\le 1$, let $\Sigma^*(A,B)$ denote the class of functions $g\in\Sigma$ which satisfy
$$
\frac{1}{z}\frac{ g'(1/z)}{g(1/z)} \prec \frac{1+A z}{1+B z} \quad\mbox{ for } z\in\mathbb{D}.
$$
The reason for choosing this typical notation will be discussed at the end of this section. The mapping $f(z)\mapsto g(z):=1/f(1/z)$ establishes a one-to-one correspondence between functions in the classes $\mathcal{S}$ and $\Sigma$ and also between functions in the classes $\mathcal{S}^*(A,B)$ and $\Sigma^*(A,B)$ because
$$
\frac{zg'(z)}{g(z)}= \frac{z\left(1/f(1/z)\right)'}{1/f(1/z)} = \frac{1}{z}\frac{ f'(1/z)}{f(1/z)} \quad\mbox{ for } |z|>1.
$$
For suitable choice of the parameters $A$ and $B$, we can obtain different subclasses studied by various authors. For instance, we list some of the subclasses for certain parameters:
\begin{enumerate}

\item $\mathcal{S}^* :=\mathcal{S}^*(1,-1)$ is the class of starlike functions and $\Sigma^*:=\Sigma^*(1,-1)$ is the corresponding class of meromorphic starlike functions.

\item For $0\le\alpha < 1$, $\mathcal{S}^*(\alpha):=\mathcal{S}^*(1-2\alpha,-1)$ is the class of starlike functions of order $\alpha$ and $\Sigma^*(\alpha):=\Sigma^*(1-2\alpha,-1)$ is the corresponding class of meromorphic starlike functions of order $\alpha$.

\item The class $\mathcal{S}^*(1,0)$ was introduced by Singh \cite{Singh-1968}.

\item The class $\mathcal{S}(\alpha):=\mathcal{S}^*(\alpha,-\alpha)$ for $0<\alpha\le 1$ was introduced by Padmanabhan \cite{Padmanabhan-1968}.

\item The class $\mathcal{S}^*(1,-1+1/\alpha)$ for $\alpha \ge 1/2$ was introduced by Singh et al. \cite{Singh-Singh-1974}.

\item The class $\mathcal{S}^*((b^2-a^2+a)/b, (1-a)/b)$, for $a+b \ge 1$, $b \le a\le 1+b$ was introduced by Silverman \cite{Silverman-1978}.

\end{enumerate}

For $f\in\mathcal{S}$ and $\lambda\in\mathbb{R}$, the function $(f(z)/z)^{-\lambda}$ is analytic in $\mathbb{D}$ and has the representation of the form
\begin{equation}\label{p3_i015}
\left(\frac{f(z)}{z}\right)^{-\lambda} = 1+ \sum_{n=1}^{\infty} a_n(-\lambda,f)z^n \quad\mbox{ for } z\in\mathbb{D}.
\end{equation}
One of the well-known extremal problems in the theory of univalent functions is to estimate the modulus of the Taylor coefficients $a_n(-\lambda,f)$ given by (\ref{p3_i015}). This problem has been extensively studied in the literature.  As a consequence of L. de Branges's proof of the celebrated Bieberbach conjecture (see \cite{Branges-1985}), Hayman and Hummel \cite{Hayman-Hummel-1986} proved that for every $f\in\mathcal{S}$ and $\lambda\le -1$,
\begin{equation}\label{p3_i020}
|a_n(-\lambda,f)| \le |a_n(-\lambda,k)|=
\left|\left(\begin{array}{c}
2\lambda\\
n
\end{array}
\right)\right|
\end{equation}
holds for every $n\ge 1$ where $k(z)=z/(1-z)^2$ is the Koebe function. But if $\lambda\in(-1,0)$ then (\ref{p3_i020}) is no longer true for $n=2$ (see \cite{Hayman-Hummel-1986}) which can be shown by a theorem of Fekete and Szeg\H{o} \cite{Fekete-Szego-1933}. More generally, Grinsphan \cite{Grinsphan-1983} showed that (\ref{p3_i020}) is false for all even coefficients, if $-1<\lambda<0$ and for all odd coefficients, if $-1/2<\lambda<0$.

The case of negative powers was first considered by L. de Branges \cite{Branges-1986} and emphasized the importance of this case in connection with the
general coefficient problem for univalent functions. By means of L\"{o}wner theory, Roth and Wirths \cite{Roth-Wirths-2001} has shown that for every $f\in\mathcal{S}$ and $\lambda> 0$ the estimate (\ref{p3_i020}) still holds for $1\le n\le \lambda+1$ except for the classical inequality $|a_2(1,f)|= |a_2(-1,f)- a_1^2(-1,f)|\le 1$.

If $F$ is the inverse of a function $f\in\mathcal{S}$ then $F$ has the following representation
\begin{equation}\label{p3_i025}
F(w)= w+ \sum_{n=2}^{\infty} A_n w^n
\end{equation}
which is valid in some neighborhood of the origin. Again, if $g\in\Sigma$ is of the form (\ref{p3_i010}) then its inverse $G(w)$ has the following representation
\begin{equation}\label{p3_i030}
G(w)= w\left(1+\sum_{n=1}^{\infty} B_nw^{-n}\right)
\end{equation}
which is valid in some neighborhood of infinity. Similarly, for $-1\le B<A\le 1$ if $f\in \mathcal{S}^*(A,B)$ is of the form (\ref{p3_i001}) then $f(z)$ has inverse $F(w)$ of the form (\ref{p3_i025}). Also, if $g\in\Sigma^*(A,B)$ is of the form (\ref{p3_i010}) then it has inverse $G(w)$ of the form (\ref{p3_i030}).

Determination of the sharp coefficient estimates of inverse functions in various subclasses of the class of analytic and univalent functions is an interesting problem in geometric function theory. In 1923, L\"{o}wner \cite{Lowner-1923} proved that if $F(w)= w+ \sum_{n=2}^{\infty} A_n w^n$ is the inverse of a function $f\in \mathcal{S}$ then
$$
|A_n|\le \frac{2n!}{n!(n+1)!} \quad\mbox{ for } n\ge 2
$$
and the inequality is sharp for the inverse of the Koebe function $k(z)= z/(1-z)^2$. An alternative approach to the inverse coefficient problem for functions in the class $\mathcal{S}$ has been investigated by Schaeffer and Spencer \cite{Schaeffer-Spencer-1945} and FitzGerald \cite{FitzGerald-1972}. Although, the inverse coefficient problem for the class $\mathcal{S}$ has been completely solved in 1923, only a few complete results are known on inverse coefficient estimates for most of the subclasses of the class $\mathcal{S}$ (for instance, see \cite{Juneja-Rajasekaran-1953, Krzyz-Libera-Zlotkiewicz-1979, Libera-Zlotkiewicz-1984, Libera-Zlotkiewicz-1985,Libera-Zlotkiewicz-1992, Livingston-1984, Silverman-1989}). Even, in some cases the inverse coefficients showed unexpected behavior. For example, it is known that if $f\in\mathcal{C}$, the class of convex univalent functions in the unit disk $\mathbb{D}$ of the form (\ref{p3_i001}) then the coefficients of its inverse function satisfies $|A_n|\le 1$ for $n=1,2,\ldots,8$ (see \cite{Libera-Zlotkiewicz-1982}); while $|A_{10}|>1$ (see \cite{Kirwan-Schober-1979}) and the exact bounds of $|A_9|$ and $|A_n|$ for $n>10$ are still unknown. In 1979, Krzy\.{z} et al. \cite{Krzyz-Libera-Zlotkiewicz-1979} determined the sharp inverse coefficient estimate of $|A_2|$ and $|A_3|$ for functions in the class $\mathcal{S}^*(\alpha)$. Further these results have been extended by Kapoor and Mishra \cite{Kapoor-Mishra-2007} (see also \cite{Srivastava-Mishra-Kund-2011}).

Let $\widetilde{\Sigma}$ denote the class of meromorphic univalent functions $h$ defined on the punctured unit disk $\mathbb{D}^*:=\{z\in\mathbb{C}: 0<|z|<1\}$ of the form $h(z)= \frac{1}{z}\left(1+\sum_{n=1}^{\infty}c_nz^{n}\right)$.
For $-1\le B<A\le 1$, let $\widetilde{\Sigma}^*(A,B)$ denote the class of functions $h\in\widetilde{\Sigma}$ which satisfy
$$
-\frac{z h'(z)}{h(z)} \prec \frac{1+A z}{1+B z} \quad\mbox{ for } z\in\mathbb{D}.
$$
Then the mapping $f(z)\mapsto h(z):=1/f(z)$ establishes a one-to-one correspondence between functions in the classes $\mathcal{S}$ and $\widetilde{\Sigma}$ and also between functions in the classes $\mathcal{S}^*(A,B)$ and $\widetilde{\Sigma}^*(A,B)$. Moreover, the mapping $g(z)\mapsto h(z):=g(1/z)$ establishes a one-to-one correspondence between functions in the classes $\Sigma$ and $\widetilde{\Sigma}$, and also between functions in the classes $\Sigma^*(A,B)$ and $\widetilde{\Sigma}^*(A,B)$. Indeed, the functions $g\in\Sigma$ and $h(z)=g(1/z)\in\widetilde{\Sigma}$ have the same geometric properties. Therefore it is sufficient to study one of the classes $\Sigma^*(A,B)$ or $\widetilde{\Sigma}^*(A,B)$. But this is not the case if we concern with coefficients of inverse functions of functions from these classes. This fact can be illustrated as follows:

Let $f(z) =z/(1-z)$ for $z\in\mathbb{D}$. It is easy to verify that $f\in\mathcal{S}^*$. Then $g(z)=1/f(1/z)=z-1$ belongs to the class $\Sigma^*:=\Sigma^*(1,-1)\subseteq\Sigma$ and $h(z)=1/f(z)=\frac{1}{z}-1$ belongs to the class $\widetilde{\Sigma}^*=\widetilde{\Sigma}^*(1,-1)\subseteq\widetilde{\Sigma}$. On the other hand, if $G(w)$ and $H(w)$ are the inverses of $g(z)$ and $h(z)$ respectively then $G(w)=w+1$ and $H(w)=(1+w)^{-1}=1-w+w^2-\cdots$.
In general, if $g$ is of the form (\ref{p3_i010}) and belongs to $\Sigma$ or $\Sigma^*(A,B)$ then it has the inverse $G(w)$ of the form (\ref{p3_i030}) but no such analog holds for $h$ in $\widetilde{\Sigma}$ or $\widetilde{\Sigma}^*(A,B)$.

In Section 2 we shall estimate the absolute value of the coefficients $a_n(-\lambda,f)$ defined by (\ref{p3_i015}) for functions in $\mathcal{S}^*(A,B)$ and $\lambda>0$ and the same will be discussed for functions in the class $\mathcal{S}^*$. As a simple application of this result, the problem of finding the sharp coefficient estimates for functions in the class $\Sigma^*(A,B)$ will completely been solved. In Section 3, we shall show that how the result obtained in Section 2 can be used to estimate the coefficients of inverse functions due to a result of Jabotinsky. Using this we shall determine the coefficient estimates of inverse functions of functions in the classes $\mathcal{S}^*(A,B)$ and $\Sigma^*(A,B)$ for $-1\le B<A\le1$.

\section{Coefficient Estimates of Negative Powers}

Throughout in this paper we assume that $-1\le B<A\le 1$ and $\delta=(1-A)/(1-B)$ unless otherwise stated. It is easy to see that $0\le\delta<1$.
For $\lambda>1$, partition the interval $0\le\delta<1$ into semi-closed intervals
\begin{align}\label{p3_m001}
\mathbb{J}(\lambda)=\left\{ \delta\in\mathbb{R}: 0\le\delta< \frac{\lambda-[\lambda]}{\lambda} \right\}
\end{align}
and for $k=0,1,\ldots, [\lambda]-1,$
\begin{align}\label{p3_m005}
\mathbb{I}_k(\lambda)=\left\{ \delta\in\mathbb{R}: \frac{\lambda-[\lambda]+k}{\lambda}\le\delta< \frac{\lambda-[\lambda]+k+1}{\lambda} \right\},
\end{align}
where $[\lambda]$ denotes the greatest integer less than or equal to $\lambda$. Note that $\mathbb{J}(\lambda)=\emptyset$ if $\lambda$ is a positive integer.
Define the function $k_{A,B}(z)$ by
\begin{equation}\label{p3_m010}
k_{A,B}(z) = \left\{\begin{array}{ll}
\displaystyle z e^{Az} & \mbox{ for } B=0\\[4mm]
z(1+Bz)^{\frac{A}{B}-1} &\mbox{ for } B\ne 0
\end{array} \right.
\end{equation}
and the function $k_{A,B}^{(n)}(z)$ by
\begin{equation}\label{p3_m015}
k_{A,B}^{(n)}(z) = (k_{A,B}(z^n))^{1/n}\quad\mbox{ for }n\in\mathbb{N}.
\end{equation}
It is easy to see that $k_{A,B}(z)$ and $k_{A,B}^{(n)}(z)$ belong to the class $\mathcal{S}^*(A,B)$.
In the following lemma we provide the essential ingredients which will be required to prove our main result in this section.

\begin{lem}\label{p3-lemma-001}
Let $-1\le B<A\le 1$ and $\lambda\in\mathbb{R}$ be fixed. Then for any $l\in\mathbb{N}$
\begin{align}\label{p3_m020}
\lambda^2(A-B)^2 &+ \sum_{n=1}^{l-1} \left[(\lambda(A-B)+nB)^2-n^2\right] \prod_{j=0}^{n-1}\left(\frac{\lambda(A-B)+Bj}{j+1}\right)^2\\
&= \frac{1}{((l-1)!)^2} \prod_{j=0}^{l-1} \left(\lambda (A-B)+Bj\right)^2.\nonumber
\end{align}
\end{lem}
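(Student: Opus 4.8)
The plan is to recognize the displayed identity as a telescoping sum once the products are organized correctly. Write $c = \lambda(A-B)$ and, for $n \ge 0$, set
$$P_n = \prod_{j=0}^{n-1}\left(\frac{c+Bj}{j+1}\right)^2,$$
with the convention $P_0 = 1$; thus $P_1 = c^2$ and, in general, $P_n = \frac{1}{(n!)^2}\prod_{j=0}^{n-1}(c+Bj)^2$. With this notation the right-hand side of (\ref{p3_m020}) is exactly $l^2 P_l$, since $\frac{1}{((l-1)!)^2} = \frac{l^2}{(l!)^2}$, while the first term $\lambda^2(A-B)^2$ on the left is precisely $P_1$.

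The crucial observation is the one-step recurrence for the $P_n$, namely
$$P_{n+1} = P_n\left(\frac{c+Bn}{n+1}\right)^2, \qquad\text{equivalently}\qquad (c+nB)^2\,P_n = (n+1)^2\,P_{n+1}.$$
Using this, each summand in (\ref{p3_m020}) rewrites as a difference of consecutive terms:
$$\left[(c+nB)^2 - n^2\right]P_n = (n+1)^2 P_{n+1} - n^2 P_n.$$
Consequently the sum $\sum_{n=1}^{l-1}\left[(c+nB)^2 - n^2\right]P_n$ telescopes to $l^2 P_l - P_1$. Adding back the initial term $P_1 = \lambda^2(A-B)^2$ recovers $l^2 P_l$, which is the right-hand side, and the identity follows.

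No genuine obstacle is expected: the only substantive step is spotting the telescoping structure, and that is immediate from the recurrence for $P_n$. For completeness one could instead argue by induction on $l$: the base case $l = 1$ reduces both sides to $c^2$, and the inductive step adds the single term $\left[(c+lB)^2 - l^2\right]P_l$ to the inductive hypothesis $l^2 P_l$, yielding $(c+lB)^2 P_l = (l+1)^2 P_{l+1}$ by the same recurrence. Either route avoids any delicate estimation and reduces the claim to elementary algebra of the products $P_n$.
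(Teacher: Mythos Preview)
Your proof is correct and essentially the same as the paper's: the paper proceeds by induction on $l$, and the inductive step amounts to adding the single term $\left[(c+(l-1)B)^2-(l-1)^2\right]P_{l-1}$ and simplifying via the same recurrence $(c+nB)^2P_n=(n+1)^2P_{n+1}$ you isolate. Your telescoping presentation makes that recurrence explicit and is a bit cleaner, but the content is identical.
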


\begin{proof}
For $l=1$ the identity (\ref{p3_m020}) is trivially true. Suppose that (\ref{p3_m020}) holds for $l=k-1,\, k\ge 2$. Then
a simple computation gives (with $l=k$ in LHS of (\ref{p3_m020}))
\begin{equation*}
\begin{split}
& \lambda^2(A-B)^2  + \sum_{n=1}^{k-1} \left[(\lambda(A-B)+nB)^2-n^2\right] \prod_{j=0}^{n-1}\left(\frac{\lambda(A-B)+Bj}{j+1}\right)^2 \\
&\quad = \lambda^2(A-B)^2 + \sum_{n=1}^{k-2} \left[(\lambda(A-B)+nB)^2-n^2\right] \prod_{j=0}^{n-1}\left(\frac{\lambda(A-B)+Bj}{j+1}\right)^2\\
&\quad\quad + \left[(\lambda(A-B)+B(k-1))^2-(k-1)^2\right] \prod_{j=0}^{k-2}\left(\frac{\lambda(A-B)+Bj}{j+1}\right)^2\\
&\quad = \frac{1}{((k-2)!)^2} \prod_{j=0}^{k-2}\left(\lambda(A-B)+Bj\right)^2\\
&\quad\quad + \left[(\lambda(A-B)+B(k-1))^2-(k-1)^2\right] \prod_{j=0}^{k-2}\left(\frac{\lambda(A-B)+Bj}{j+1}\right)^2\\
&\quad = \frac{1}{((k-2)!)^2} \prod_{j=0}^{k-2}\left(\lambda(A-B)+Bj\right)^2 \left[1+ \frac{(\lambda(A-B)+B(k-1))^2-(k-1)^2}{(k-1)^2}\right]\\
&\quad = \frac{1}{((k-1)!)^2} \prod_{j=0}^{k-1}\left(\lambda(A-B)+Bj\right)^2.
\end{split}
\end{equation*}
Thus (\ref{p3_m020}) is true for $l=k$ and hence, by mathematical induction, the identity (\ref{p3_m020}) is true for every $l\in\mathbb{N}$.
\end{proof}

\begin{thm}\label{p3-theorem-001}
Let $f\in\mathcal{S}^*(A,B)$ be of the form (\ref{p3_i001}) and for each fixed $\lambda>0$, the function $(f(z)/z)^{-\lambda}$ is given by (\ref{p3_i015}). Also let $\delta=(1-A)/(1-B)$. Then\\
\begin{itemize}

\item[(i)] for $0<\lambda\le1$ and $0\le\delta<1$, we have
\begin{equation}\label{p3_m025}
\left|a_l(-\lambda,f)\right|\le \frac{\lambda(A-B)}{l}, \quad l= 1,2,\ldots;
\end{equation}

\item[(ii)] for $\lambda>1$, $\delta\in\mathbb{I}_k(\lambda)$, $k=0,1,\ldots, [\lambda]-1$ and $\lambda(1-\delta)\notin\mathbb{N}$ we have
\begin{align}
\left|a_l(-\lambda,f)\right| &\le \prod_{j=0}^{l-1} \frac{\lambda(A-B)+Bj}{j+1}, \quad l= 1,2,\ldots, [\lambda]-k \quad\mbox{ and }\label{p3_m030}\\
\left|a_l(-\lambda,f)\right| &\le \left(\frac{[\lambda]-k}{l}\right)  \prod_{j=0}^{[\lambda]-k-1} \frac{\lambda(A-B)+Bj}{j+1}, \quad l= [\lambda]-k+1,\ldots;\label{p3_m035}
\end{align}
while for $\lambda(1-\delta)\in\mathbb{N}$, (\ref{p3_m030}) holds for $l= 1,2,\ldots, [\lambda]-k+1$ and (\ref{p3_m035}) holds for $l= [\lambda]-k+2,\ldots$.\\
In particular, if $\delta\in\mathbb{I}_{[\lambda]-1}(\lambda)$ then
\begin{equation}\label{p3_m040}
\left|a_l(-\lambda,f)\right|\le \frac{\lambda(A-B)}{l}\quad \mbox{ for } l= 1,2,\ldots;
\end{equation}

\item[(iii)] for $\lambda>1$ and $\delta\in\mathbb{J}(\lambda)$, we have
\begin{align}
\left|a_l(-\lambda,f)\right| &\le \prod_{j=0}^{l-1} \frac{\lambda(A-B)+Bj}{j+1}, \quad l= 1,2,\ldots, [\lambda]+1, \quad\mbox{ and }\label{p3_m045}\\
\left|a_l(-\lambda,f)\right| &\le \left(\frac{[\lambda]-k}{l}\right)  \prod_{j=0}^{[\lambda]-k-1} \frac{\lambda(A-B)+Bj}{j+1}, \quad l= [\lambda]+2,\ldots.\label{p3_m050}
\end{align}

\end{itemize}
The estimates (\ref{p3_m025}), (\ref{p3_m030}), (\ref{p3_m040}) and (\ref{p3_m045}) are sharp.
\end{thm}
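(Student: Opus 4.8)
The plan is to reduce the entire statement to one coefficient recursion that Lemma~\ref{p3-lemma-001} is built to close. Write $p(z)=(f(z)/z)^{-\lambda}=1+\sum_{l\ge1}a_lz^l$ with $a_l:=a_l(-\lambda,f)$. First I would put the subordination in logarithmic-derivative form: since $zf'(z)/f(z)=(1+A\omega(z))/(1+B\omega(z))$ for a Schwarz function $\omega$, a direct computation gives $zp'(z)/p(z)=-\lambda(A-B)\omega(z)/(1+B\omega(z))$, and clearing the denominator yields $-zp'(z)=\omega(z)\bigl(\lambda(A-B)p(z)+Bzp'(z)\bigr)$. Setting $R:=-zp'$ and $Q:=\lambda(A-B)p+Bzp'$, so that $R=\omega Q$ with $R_k=-ka_k$, $Q_0=\lambda(A-B)$ and $Q_k=(\lambda(A-B)+kB)a_k$, the one analytic fact I really need is the index-shifted area inequality $\sum_{k=1}^{n}|R_k|^2\le\sum_{k=0}^{n-1}|Q_k|^2$.

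This inequality is the technical heart and the step I expect to cost the most care. It holds precisely because $\omega(0)=0$: the coefficients $R_1,\dots,R_n$ of $\omega Q$ depend only on $Q_0,\dots,Q_{n-1}$, hence coincide with those of $\omega\widetilde Q$ where $\widetilde Q=\sum_{k=0}^{n-1}Q_kz^k$, and multiplication by a Schwarz function is a contraction on $H^2$, so $\sum_{k=1}^{n}|R_k|^2\le\|\omega\widetilde Q\|_{H^2}^2\le\|\widetilde Q\|_{H^2}^2=\sum_{k=0}^{n-1}|Q_k|^2$. Substituting the coefficients and isolating the top term, this says that for every $n\ge1$
\[
n^2|a_n|^2\le \lambda^2(A-B)^2+\sum_{k=1}^{n-1}\varepsilon_k\,|a_k|^2,\qquad \varepsilon_k:=(\lambda(A-B)+kB)^2-k^2.
\]

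Now Lemma~\ref{p3-lemma-001} enters. Writing $P_l:=\prod_{j=0}^{l-1}\frac{\lambda(A-B)+Bj}{j+1}$ for the conjectured extremal value, identity (\ref{p3_m020}) is exactly the assertion that the $P_l$ satisfy the same recursion with equality, $n^2P_n^2=\lambda^2(A-B)^2+\sum_{k=1}^{n-1}\varepsilon_kP_k^2$. The sign of the weight is read off from $\varepsilon_k=(\lambda(A-B)-k(1-B))(\lambda(A-B)+k(1+B))$, whose second factor is positive, so $\varepsilon_k\ge0$ iff $k\le\lambda(A-B)/(1-B)=\lambda(1-\delta)$. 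I would then translate the hypotheses on $\delta$: $\delta\in\mathbb{I}_k(\lambda)$ is equivalent to $m-1<\lambda(1-\delta)\le m$ with $m=[\lambda]-k$, while $\delta\in\mathbb{J}(\lambda)$ means $[\lambda]<\lambda(1-\delta)$; in each case one gets a single cutoff $m$ (namely $m=1$ for $0<\lambda\le1$, $m=[\lambda]-k$ on $\mathbb{I}_k$, $m=[\lambda]+1$ on $\mathbb{J}$) with $\varepsilon_j\ge0$ for $j<m$ and $\varepsilon_j\le0$ for $j\ge m$.

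With the two recursions side by side the estimates follow by one induction. For $1\le n\le m$ every weight $\varepsilon_k$ with $k<n$ is nonnegative, so assuming $|a_k|\le P_k$ for $k<n$ and inserting this gives $n^2|a_n|^2\le\lambda^2(A-B)^2+\sum_{k=1}^{n-1}\varepsilon_kP_k^2=n^2P_n^2$, i.e.\ $|a_n|\le P_n$, the base case being $|a_1|\le\lambda(A-B)=P_1$; this yields (\ref{p3_m025}), (\ref{p3_m030}) and (\ref{p3_m045}). For $n>m$ the weights $\varepsilon_k$ with $k\ge m$ are nonpositive and may be discarded, leaving $n^2|a_n|^2\le\lambda^2(A-B)^2+\sum_{k=1}^{m-1}\varepsilon_kP_k^2=m^2P_m^2$, which is (\ref{p3_m035}), (\ref{p3_m040}) and (\ref{p3_m050}). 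The borderline case $\lambda(1-\delta)\in\mathbb{N}$ is absorbed by observing $\varepsilon_m=0$, so the increasing range extends one further step and $P_{m+1}=\tfrac{m}{m+1}P_m$ reconciles the two formulas. Finally I would check sharpness by direct substitution: for $k_{A,B}$ the series $(f(z)/z)^{-\lambda}=(1+Bz)^{-\lambda(A-B)/B}$ attains the products $P_l$, and the $n$-fold transforms $k_{A,B}^{(n)}$ give $(1+Bz^n)^{-\lambda(A-B)/(nB)}$, whose nonzero coefficients realize the $\tfrac{m}{l}P_m$-type bounds.
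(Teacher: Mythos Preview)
Your proposal is correct and follows essentially the same route as the paper: both derive the relation $-zp'=\omega\bigl(\lambda(A-B)p+Bzp'\bigr)$ (the paper writes it with $z\omega$ in place of your Schwarz $\omega$), apply the Clunie/Parseval--Gutzmer argument to obtain the key inequality $n^2|a_n|^2\le\lambda^2(A-B)^2+\sum_{k=1}^{n-1}\varepsilon_k|a_k|^2$, analyze the sign of $\varepsilon_k$ via $\lambda(1-\delta)-k$, and close the induction with Lemma~\ref{p3-lemma-001}. Your unified cutoff $m$ and the observation that $P_l$ satisfies the recursion with equality make the case split cleaner, but the substance is identical; note only that the theorem claims sharpness just for (\ref{p3_m025}), (\ref{p3_m030}), (\ref{p3_m040}), (\ref{p3_m045}), so your extremal check for $k_{A,B}$ and $k_{A,B}^{(l)}$ at the single index $l$ is exactly what is needed, not the stronger $\tfrac{m}{l}P_m$ statement.
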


\begin{proof}

Let $f\in \mathcal{S}^*(A,B)$ and $g(z)=(z/f(z))^\lambda$. In view of the relation (\ref{p3_i005}), it follows that, there exists an analytic function $\omega:\mathbb{D}\rightarrow \overline{\mathbb{D}}$ such that
\begin{equation}\label{p3_m055}
\frac{z f'(z)}{f(z)} =1- \frac{zg'(z)}{\lambda g(z)}= \frac{1+Az \omega(z)}{1+Bz\omega(z)} \quad\mbox{ for } z\in\mathbb{D}
\end{equation}
which is equivalent to
\begin{equation}\label{p3_m060}
g'(z)= \omega(z)\left(\lambda(B-A)g(z)- Bzg'(z)\right).
\end{equation}
By substituting (\ref{p3_i015}) in (\ref{p3_m060}) and further simplification  shows that
\begin{align*}\label{p3_m065}
\sum_{n=1}^{l} n a_n(-\lambda,f)z^{n-1} &+ \sum_{n=l+1}^{\infty} c_n(-\lambda,f)z^{n-1}\\
&=  \omega(z) \left( \lambda(B-A)+ \sum_{n=1}^{l-1} \left(\lambda(B-A)-nB\right)a_n(-\lambda,f)z^n\right)
\end{align*}
for certain coefficients $c_n(-\lambda,f)$. By using Parseval-Gutzmer formula (see also Clunie's method \cite{Clunie-1959,Clunie-Koegh-1960,Robertson-1970}) we obtain
$$
\sum_{n=1}^{l} n^2 \left|a_n(-\lambda,f)\right|^2 \le \lambda^2(A-B)^2 + \sum_{n=1}^{l-1} \left(\lambda(B-A)-nB\right)^2 \left|a_n(-\lambda,f)\right|^2
$$
which can be written as
\begin{equation}\label{p3_m070}
l^2 \left|a_l(-\lambda,f)\right|^2 \le \lambda^2(A-B)^2 + \sum_{n=1}^{l-1} \left[\left(\lambda(A-B)+nB\right)^2-n^2\right] \left|a_n(-\lambda,f)\right|^2.
\end{equation}
Since $\left(\lambda(A-B)+nB\right)^2-n^2 = (1-B) \left(\lambda(A-B)+n(1+B)\right) \left(\lambda(1-\delta)-n\right)$,
where $\delta=(1-A)/(1-B)$, the sign of each term inside the summation symbol on the right-hand-side of (\ref{p3_m070}) depends on the expression $\lambda(1-\delta)-n$ for $n=1,2,\ldots, l-1$.

If $0<\lambda\le1$ then $\lambda(1-\delta)-n\le0$ for $n=1,2,\ldots, l-1$. Then from (\ref{p3_m070}) we obtain
$$
l^2 \left|a_l(-\lambda,f)\right|^2 \le \lambda^2(A-B)^2
$$
which gives the estimate (\ref{p3_m025}). To determine the sign of the expression $\lambda(1-\delta)-n$ for $\lambda>1$, we partition the interval $0\le\delta< 1$ into semi-closed sub-intervals $\mathbb{J}(\lambda)$ and $\mathbb{I}_k(\lambda)$, $k=0,1,\ldots [\lambda]-1$, given by (\ref{p3_m001}) and (\ref{p3_m005}), respectively.

For any fixed $k$ $(k=0,1,\ldots, [\lambda]-1)$, suppose that $\delta\in\mathbb{I}_k(\lambda)$. Then the following two cases arise.

\textbf{Case-1:} Let $\lambda(1-\delta)\notin\mathbb{N}$. In this case
\begin{equation}\label{p3_m075}
\left\{\begin{array}{ll}
\displaystyle \lambda(1-\delta)-n >0  & \mbox{ for } n= 1,2,\ldots, [\lambda]-k-1,\\[2mm]
\displaystyle \lambda(1-\delta)-n \le 0 & \mbox{ for } n= [\lambda]-k, [\lambda]-k+1,\ldots.
\end{array} \right.
\end{equation}
By considering only non-negative contribution on the right hand summation in (\ref{p3_m070}) and using (\ref{p3_m075}), it follows that
\begin{equation}\label{p3_m080}
l^2 \left|a_l(-\lambda,f)\right|^2 \le \lambda^2(A-B)^2 + \sum_{n=1}^{l-1} \left[\left(\lambda(A-B)+nB\right)^2-n^2\right] \left|a_n(-\lambda,f)\right|^2
\end{equation}
for $l=1,2,\ldots, [\lambda]-k$; while for $l= [\lambda]-k, [\lambda]-k+1,\ldots$, we have
\begin{equation}\label{p3_m085}
l^2 \left|a_l(-\lambda,f)\right|^2 \le \lambda^2(A-B)^2 + \sum_{n=1}^{[\lambda]-k-1} \left[\left(\lambda(A-B)+nB\right)^2-n^2\right] \left|a_n(-\lambda,f)\right|^2.
\end{equation}

We now apply the principle of mathematical induction on $l$. For $l=1$, it follows from (\ref{p3_m080}) that
$$
\left|a_1(-\lambda,f)\right| \le \lambda(A-B).
$$
This gives the estimate (\ref{p3_m030}) for $l=1$. For $l=2,\ldots, [\lambda]-k-1$, we now assume that
\begin{equation}\label{p3_m090}
\left|a_l(-\lambda,f)\right|\le \prod_{j=0}^{l-1} \frac{\lambda(A-B)+Bj}{j+1}
\end{equation}
holds true. From (\ref{p3_m080}), (\ref{p3_m090}) and Lemma \ref{p3-lemma-001}, it follows that
\begin{eqnarray*}
l^2 \left|a_l(-\lambda,f)\right|^2 & \le & \lambda^2(A-B)^2  + \sum_{n=1}^{l-1} \left[\left(\lambda(A-B)+nB\right)^2-n^2\right] \prod_{j=0}^{n-1} \left(\frac{\lambda(A-B)+B j}{j+1}\right)^2\\
&=& \frac{1}{((l-1)!)^2} \prod_{j=0}^{l-1}\left(\lambda(A-B)+Bj\right)^2.
\end{eqnarray*}
Therefore, for $l= 1,2,\ldots, [\lambda]-k$,
$$
\left|a_l(-\lambda,f)\right| \le \prod_{j=0}^{l-1} \frac{\lambda(A-B)+Bj}{j+1}
$$
which establishes the inequality (\ref{p3_m030}).

Further, if $l= [\lambda]-k+1,\ldots$ then by using (\ref{p3_m085}), the induction hypothesis (\ref{p3_m090}) and Lemma \ref{p3-lemma-001}, we obtain
\begin{eqnarray*}
l^2 \left|a_l(-\lambda,f)\right|^2 & \le & \lambda^2(A-B)^2 + \sum_{n=1}^{[\lambda]-k-1} \left[\left(\lambda(A-B)+ nB\right)^2-n^2\right] \prod_{j=0}^{n-1} \left(\frac{\lambda(A-B)+Bj}{j+1}\right)^2\\
&=& \frac{1}{(([\lambda]-k-1)!)^2} \prod_{j=0}^{[\lambda]-k-1}\left(\lambda(A-B)+Bj\right)^2
\end{eqnarray*}
or equivalently
$$
\left|a_l(-\lambda,f)\right| \le \frac{[\lambda]-k}{l} \prod_{j=0}^{[\lambda]-k-1} \frac{\lambda(A-B)+Bj}{j+1} \quad\mbox{ for } l= [\lambda]-k+1,\ldots
$$
which is precisely the inequality (\ref{p3_m035}).

\textbf{Case-2:} Let $\lambda(1-\delta)\in\mathbb{N}$. Note that the conditions $\delta\in\mathbb{I}_k(\lambda)$ $(k=0,1,\ldots, [\lambda]-1)$ and $\lambda(1-\delta)\in\mathbb{N}$ simultaneously imply that $\delta= (\lambda-[\lambda]+k)/\lambda$. In this case
\begin{equation*}
\left\{\begin{array}{ll}
\displaystyle \lambda(1-\delta)-n >0  & \mbox{ for } n= 1,2,\ldots, [\lambda]-k,\\[2mm]
\displaystyle \lambda(1-\delta)-n \le 0 & \mbox{ for } n= [\lambda]-k+1, [\lambda]-k+1,\ldots.
\end{array} \right.
\end{equation*}
Therefore by proceeding as in Case-1, one can see that (\ref{p3_m030}) holds for $l= 1,2,\ldots, [\lambda]-k+1$ and (\ref{p3_m035}) holds for $l= [\lambda]-k+2,\ldots$.

For $k=[\lambda]-1$, the two estimates in (\ref{p3_m030}) and (\ref{p3_m035}) respectively reduce to
\begin{equation}\label{p3_m095}
\left|a_1(-\lambda,f)\right|\le \lambda(A-B)
\end{equation}
and
\begin{equation}\label{p3_m100}
\left|a_l(-\lambda,f)\right|\le \frac{\lambda(A-B)}{l} \quad\mbox{ for } l=2,3,\ldots.
\end{equation}
Therefore (\ref{p3_m040}) follows by combining the  inequalities (\ref{p3_m095}) and (\ref{p3_m100}).

Finally, if $\lambda>1$ and $\delta\in\mathbb{J}(\lambda)$ then
\begin{equation*}
\left\{\begin{array}{ll}
\displaystyle \lambda(1-\delta)-n >0  & \mbox{ for } n= 1,2,\ldots, [\lambda]\\
\displaystyle \lambda(1-\delta)-n \le 0 & \mbox{ for } n= [\lambda]+1, [\lambda]+2,\ldots.
\end{array} \right.
\end{equation*}
Therefore by proceeding as in Case-1, one can obtain the estimates (\ref{p3_m045}) and (\ref{p3_m050}).

Equality holds in (\ref{p3_m030}) and (\ref{p3_m045}) for the function $k_{A,B}(z)$ defined by (\ref{p3_m010}). On the other hand equality holds in (\ref{p3_m025}) and (\ref{p3_m040}) for every $l\in\mathbb{N}$ for the function $k_{A,B}^{(l)}(z)$ defined by (\ref{p3_m015}).

\end{proof}

The following special cases of Theorem \ref{p3-theorem-001} may be of worth mentioning.

\begin{cor}\label{p3-corollary-001}
Let $f\in\mathcal{S}^*$ be of the form (\ref{p3_i001}) and for each fixed $\lambda>0$, the function $(f(z)/z)^{-\lambda}$ is given by (\ref{p3_i015}). Then\\
\begin{itemize}

\item[(i)] for $0<\lambda\le1$, we have
\begin{equation}\label{p3_m105}
\left|a_l(-\lambda,f)\right|\le \frac{\lambda(A-B)}{l}, \quad l= 1,2,\ldots;
\end{equation}

\item[(ii)] for $\lambda>1$, we have
\begin{align}
\left|a_l(-\lambda,f)\right| &\le \prod_{j=0}^{l-1} \frac{2\lambda-j}{j+1}, \quad l= 1,2,\ldots, [\lambda]+1\quad\mbox{ and }\label{p3_m110}\\
\left|a_l(-\lambda,f)\right| &\le \left(\frac{[\lambda]-k}{l}\right)  \prod_{j=0}^{[\lambda]-k-1} \frac{2\lambda-j}{j+1}, \quad l= [\lambda]+2,\ldots;\label{p3_m115}
\end{align}
\end{itemize}
The inequalities (\ref{p3_m105}) and (\ref{p3_m110}) are sharp.
\end{cor}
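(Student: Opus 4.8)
The plan is to obtain Corollary~\ref{p3-corollary-001} as the specialization $A=1$, $B=-1$ of Theorem~\ref{p3-theorem-001}, since $\mathcal{S}^*=\mathcal{S}^*(1,-1)$. First I would record the arithmetic of this choice: one has $\delta=(1-A)/(1-B)=0$ and $A-B=2$, so that the basic product occurring throughout the theorem collapses to
\begin{equation*}
\prod_{j=0}^{l-1}\frac{\lambda(A-B)+Bj}{j+1}=\prod_{j=0}^{l-1}\frac{2\lambda-j}{j+1}.
\end{equation*}
With these substitutions the bound $\lambda(A-B)/l$ of part~(i) is literally (\ref{p3_m105}), so the range $0<\lambda\le 1$ is immediate.

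For $\lambda>1$ the argument reduces to locating the single value $\delta=0$ among the subintervals (\ref{p3_m001})--(\ref{p3_m005}); this is where I expect the only real (if minor) care to be required, namely in checking that the integer and non-integer cases for $\lambda$ yield the same estimates. If $\lambda\notin\mathbb{N}$, then $(\lambda-[\lambda])/\lambda>0$, so $0\in\mathbb{J}(\lambda)$ and part~(iii) applies verbatim: (\ref{p3_m045}) becomes (\ref{p3_m110}) on $1\le l\le[\lambda]+1$, and (\ref{p3_m050}) becomes (\ref{p3_m115}) for $l\ge[\lambda]+2$. If $\lambda\in\mathbb{N}$, then $\mathbb{J}(\lambda)=\emptyset$ and $0\in\mathbb{I}_0(\lambda)$, while $\lambda(1-\delta)=\lambda\in\mathbb{N}$; hence Case-2 of part~(ii) with $k=0$ governs, and there the product estimate (\ref{p3_m030}) holds for $l=1,\ldots,[\lambda]-k+1=[\lambda]+1$ and the tail estimate (\ref{p3_m035}) for $l\ge[\lambda]+2$, again reproducing (\ref{p3_m110})--(\ref{p3_m115}). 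Thus in either case the product bound (\ref{p3_m110}) is valid precisely on $1\le l\le[\lambda]+1$.

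Finally, for sharpness I would specialize the extremal functions already exhibited in the proof of the theorem. Putting $A=1$, $B=-1$ in (\ref{p3_m010}) gives the Koebe function $k_{1,-1}(z)=z(1-z)^{-2}$, for which $(k_{1,-1}(z)/z)^{-\lambda}=(1-z)^{2\lambda}$; its $l$-th coefficient has modulus $\prod_{j=0}^{l-1}\frac{2\lambda-j}{j+1}$ for $1\le l\le[\lambda]+1$, so (\ref{p3_m110}) is attained. Likewise (\ref{p3_m015}) gives $k_{1,-1}^{(l)}(z)=z(1-z^l)^{-2/l}$, for which $(k_{1,-1}^{(l)}(z)/z)^{-\lambda}=(1-z^l)^{2\lambda/l}$ has $l$-th coefficient of modulus $2\lambda/l=\lambda(A-B)/l$, so (\ref{p3_m105}) is attained. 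The whole argument is routine once Theorem~\ref{p3-theorem-001} is in hand; the only point demanding attention is the index bookkeeping that makes the integer and non-integer cases for $\lambda>1$ collapse to identical estimates.
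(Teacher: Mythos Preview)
Your proposal is correct and matches the paper's intent: the paper gives no separate proof but simply presents Corollary~\ref{p3-corollary-001} as a special case of Theorem~\ref{p3-theorem-001}, and your substitution $A=1$, $B=-1$ (so $\delta=0$, $A-B=2$) together with the case split $\lambda\notin\mathbb{N}$ versus $\lambda\in\mathbb{N}$ is exactly the right way to read off the corollary, including the sharpness via $k_{1,-1}$ and $k_{1,-1}^{(l)}$. Your observation that the integer case falls under part~(ii) with $k=0$ and $\lambda(1-\delta)\in\mathbb{N}$, while the non-integer case falls under part~(iii), is the only nontrivial bookkeeping, and you handle it correctly.
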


\begin{cor}\label{p3-corollary-005}
Let $f\in\mathcal{S}^*(A,B)$ be of the form (\ref{p3_i001}) for some $-1\le B<A\le 1$. Also, let
\begin{equation}\label{p3_m120}
\frac{z}{f(z)} = 1+ \sum_{n=1}^{\infty} a_n(-1,f)z^n \quad\mbox{ for } z\in\mathbb{D}.
\end{equation}
Then for $n\ge 1$ we have
\begin{equation}\label{p3_m125}
\left|a_n(-1,f)\right|\le \frac{A-B}{n}.
\end{equation}
The inequality (\ref{p3_m125}) is sharp and equality is attained for the function $k_{A,B}^{(n)}(z)$ defined by (\ref{p3_m015}).
\end{cor}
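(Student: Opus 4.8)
The plan is to recognize this corollary as nothing more than the case $\lambda=1$ of Theorem \ref{p3-theorem-001}, specialized to the first power of $z/f(z)$. First I would observe that with $\lambda=1$ the analytic function $(f(z)/z)^{-\lambda}$ appearing in (\ref{p3_i015}) is exactly $z/f(z)$, so the coefficients $a_n(-1,f)$ defined by the expansion (\ref{p3_m120}) coincide with the coefficients $a_n(-\lambda,f)$ of (\ref{p3_i015}) for $\lambda=1$. Since $\lambda=1$ lies in the range $0<\lambda\le1$ and $0\le\delta<1$ always holds by our standing assumption $-1\le B<A\le1$, part (i) of Theorem \ref{p3-theorem-001} applies verbatim.

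Next I would simply substitute $\lambda=1$ into the estimate (\ref{p3_m025}). This gives
\begin{equation*}
\left|a_n(-1,f)\right|\le \frac{1\cdot(A-B)}{n}=\frac{A-B}{n}\qquad\text{for }n\ge1,
\end{equation*}
which is precisely (\ref{p3_m125}). There is essentially no obstacle here: the entire analytic content was already carried out in the proof of the theorem. For completeness one could note that the underlying mechanism is transparent in this case, since for $\lambda=1$ the quantity $\lambda(1-\delta)-n=(1-\delta)-n\le0$ for every $n\ge1$, so every term inside the summation on the right-hand side of (\ref{p3_m070}) is nonpositive and may be discarded, leaving $n^2|a_n(-1,f)|^2\le(A-B)^2$ directly.

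For the sharpness assertion I would invoke the final paragraph of the proof of Theorem \ref{p3-theorem-001}, which states that equality holds in (\ref{p3_m025}) for each $l\in\mathbb{N}$ for the function $k_{A,B}^{(l)}(z)$ defined in (\ref{p3_m015}). Taking $l=n$ shows that $k_{A,B}^{(n)}(z)$ attains equality in the $n$-th coefficient, establishing that the bound $(A-B)/n$ cannot be improved. The only mild point worth checking is the bookkeeping of the correspondence between the powers, namely that $a_n(-1,f)$ in (\ref{p3_m120}) really is the $l=n$ instance of (\ref{p3_m025}); once that identification is made, the result is immediate and no further computation is needed.
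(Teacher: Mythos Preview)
Your proposal is correct and matches the paper's approach exactly: the paper presents this corollary (together with Corollary \ref{p3-corollary-001}) under the heading ``The following special cases of Theorem \ref{p3-theorem-001} may be of worth mentioning'' and offers no separate proof, so the intended argument is precisely the specialization $\lambda=1$ in part (i) that you describe. Your added remark explaining why each summand in (\ref{p3_m070}) is nonpositive when $\lambda=1$, and your identification of the extremal function from the sharpness clause of the theorem, are both accurate and make the deduction fully explicit.
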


Let $g\in\Sigma^*(A,B)$ be given by (\ref{p3_i010}). Due to the one-to-one correspondence between functions in the classes $\mathcal{S}^*(A,B)$ and $\Sigma^*(A,B)$, there exists a unique function $f\in\mathcal{S}^*(A,B)$ such that $g(z)=1/f(1/z)$. From (\ref{p3_i010}) and (\ref{p3_m120}) one can easily see that $b_n=a_n(-1,f)$ for $n\ge 1$. In view of Theorem \ref{p3-theorem-001}, we obtain $|b_n|\le (A-B)/n$ for $n\ge 1$ which leads to the result of Karunakaran \cite[Theorem 1]{Karunakaran-1976} except for the changes in the notation.

\section{Inverse Coefficient Estimates}

In this section our main concern is to find the coefficient estimates for inverses of functions in the classes $\mathcal{S}^*(A,B)$ and $\Sigma^*(A,B)$. The key idea is to link the Taylor coefficients of $(f(z)/z)^{-\lambda}$ to the Taylor coefficients of $(F(w)/w)^{-\mu}$ where $F(w)$ denotes the inverse function of $f(z)$. Indeed, the following result fulfills our requirement which is a reformulation of a result due to Jabotinsky \cite[Theorem II]{Jabotinsky-1953}.

\begin{lem}\cite[Theorem II]{Jabotinsky-1953}\label{p3-lemma-005}
Let the function $f\in \mathcal{A}$ be of the form (\ref{p3_i001}). Then the inverse function $F(w)$ of the function $f(z)$ is analytic in $|w|<\rho$ for some $\rho>0$. Also suppose that
$$
\left(\frac{z}{f(z)}\right)^{t} = 1+ \sum_{n=1}^{\infty} a_n(-t,f)z^n
$$
and
$$
\left(\frac{w}{F(w)}\right)^{t} = 1+ \sum_{n=1}^{\infty} A_n(-t,F)w^n
$$
where $t= \pm1, \pm2, \pm3, \ldots$. Then
$$
A_n(t,F)= \frac{t}{t+n}\,a_n(-(t+n),f) \quad\mbox{ for }~~  t+n \ne0 \quad\mbox{ and }~~ t= \pm1, \pm2, \pm3, \ldots
$$
and $A_{-t}(t,F)$ is given by
\begin{equation}\label{p3_m130}
\sum_{t=-\infty}^{\infty} A_{-t}(t,F)z^{-t-1} = \frac{f'(z)}{f(z)}.
\end{equation}
\end{lem}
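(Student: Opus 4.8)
The statement is the classical Jabotinsky/Lagrange inversion relation, so the plan is to prove it through the residue (contour-integral) form of Lagrange inversion rather than by equating coefficients in the functional equation $f(F(w))=w$. First I would record the analytic preliminary. Since $f(z)=z+\sum_{n\ge2}a_nz^n$ has $f'(0)=1\ne0$, the inverse function theorem produces a unique $F$ analytic on a disk $|w|<\rho$ with $F(0)=0$, $F'(0)=1$, and $z\mapsto f(z)$ restricts to a biholomorphism of a small punctured neighbourhood of $0$ onto a small punctured neighbourhood of $0$. On such neighbourhoods the series $(z/f(z))^t$, $(w/F(w))^t$ and their reciprocals are honest (Laurent) expansions, so every coefficient-extraction below is legitimate.

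Next I would pass to residues. Writing $A_n(t,F)=[w^n](F(w)/w)^t=[w^{\,n+t}]F(w)^t$, Cauchy's formula gives, for a small circle about the origin,
$$A_n(t,F)=\frac{1}{2\pi i}\oint\frac{F(w)^t}{w^{\,n+t+1}}\,dw=\operatorname{Res}_{z=0}\frac{z^{t}f'(z)}{f(z)^{\,n+t+1}},$$
where the second equality is the \emph{key step}: the substitution $w=f(z)$, under which $F(w)=z$ and $dw=f'(z)\,dz$, carries the contour to a loop about $z=0$ and leaves the integral invariant because $f$ is a local biholomorphism.

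I would then separate the two regimes in the statement. When $n+t\ne0$, the identity $f'(z)f(z)^{-(n+t)-1}=-\tfrac{1}{n+t}\tfrac{d}{dz}f(z)^{-(n+t)}$ allows integration by parts inside the residue (the residue of an exact derivative vanishes), giving
$$A_n(t,F)=\frac{t}{n+t}\operatorname{Res}_{z=0}z^{t-1}f(z)^{-(n+t)}=\frac{t}{n+t}\,[z^{-1}]\,z^{-n-1}\Big(\tfrac{z}{f(z)}\Big)^{n+t}=\frac{t}{n+t}\,[z^{n}]\Big(\tfrac{z}{f(z)}\Big)^{n+t}.$$
Since $(z/f(z))^{n+t}=(f(z)/z)^{-(n+t)}$, the last coefficient is exactly $a_n(-(n+t),f)$, which yields $A_n(t,F)=\frac{t}{n+t}a_n(-(n+t),f)$. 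When $n+t=0$ the integration-by-parts step is unavailable (it would divide by $n+t$), but the residue formula above specializes directly to
$$A_{-t}(t,F)=\operatorname{Res}_{z=0}z^{t}\,\frac{f'(z)}{f(z)}=[z^{-t-1}]\,\frac{f'(z)}{f(z)}.$$
Multiplying by $z^{-t-1}$ and summing over all $t\in\mathbb{Z}$ (as $t$ ranges over $\mathbb{Z}$ the exponent $-t-1$ sweeps all of $\mathbb{Z}$) reassembles the Laurent expansion of $f'(z)/f(z)$ and produces (\ref{p3_m130}).

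The main obstacle is the change-of-variable step: one must justify that $w=f(z)$ transports a suitable small contour about $w=0$ to a contour about $z=0$ and that the contour integral is genuinely invariant, which rests entirely on $f$ being a local biholomorphism established in the preliminary step. Everything else — the integration by parts inside the residue and the bookkeeping of exponents to match $[z^n](z/f(z))^{n+t}$ with $a_n(-(n+t),f)$ — is routine once the residue representation and this substitution are in place.
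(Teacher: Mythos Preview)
Your argument is correct: the contour–substitution form of Lagrange--B\"urmann inversion, followed by the integration-by-parts identity $\operatorname{Res}_{z=0}\bigl(z^t h(z)\bigr)'=0$, cleanly produces both the formula for $n+t\ne0$ and the residual case $n=-t$. The bookkeeping matching $[z^n](z/f(z))^{n+t}$ with $a_n(-(n+t),f)$ is right, and the justification of the change of variable $w=f(z)$ via local biholomorphy is the correct point to flag.

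As for comparison: the paper does \emph{not} supply its own proof of this lemma. It is stated as a reformulation of Jabotinsky's Theorem~II and simply cited, then used as a black box in the proofs of Theorems~\ref{p3-theorem-005} and~\ref{p3-theorem-010}. So there is no ``paper's approach'' to compare against; you have filled in a proof where the authors chose to quote the literature. Your residue-calculus route is in fact the standard modern derivation of the Jabotinsky relations (essentially the Lagrange inversion formula in its contour-integral guise), whereas Jabotinsky's original 1953 argument proceeds through the matrix representation of composition operators. Either way, what you have written stands on its own and would serve as a self-contained proof of the lemma.
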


The following theorem gives the coefficient estimates for the inverse of functions in the class $\mathcal{S}^*(A,B)$.

\begin{thm}\label{p3-theorem-005}
Let $f\in\mathcal{S}^*(A,B)$ be of form(\ref{p3_i001}) and $F(w)$ be its inverse which is valid in some neighborhood of the origin and has the representation $F(w)= w+ \sum_{n=2}^{\infty} A_n w^n$. Also let $\delta=(1-A)/(1-B)$ and for a fixed $n\in\mathbb{N}$, $\mathbb{I}_k(n)=\left[\frac{k}{n},\frac{k+1}{n}\right)$, $k=0,1,\ldots, n-1$. Then\\
\begin{enumerate}
\item [(i)] for $\delta\in \mathbb{I}_0(n) \bigcup \mathbb{I}_1(n)$, we have
\begin{equation}\label{p3_m135}
\left|A_n\right|\le \frac{1}{n}\prod_{j=0}^{n-2} \frac{n(A-B)+Bj}{j+1};
\end{equation}

\item [(ii)] for $\delta\in \mathbb{I}_k(n)$, $k=2,3,\ldots, n-2$, we have
\begin{equation}\label{p3_m140}
\left|A_n\right|\le \frac{n-k}{n(n-1)}\prod_{j=0}^{n-k-1} \frac{n(A-B)+Bj}{j+1};
\end{equation}

\item [(iii)] for $\delta\in \mathbb{I}_{n-1}(n)$, we have
\begin{equation}\label{p3_m145}
\left|A_n\right|\le \frac{A-B}{n-1}.
\end{equation}
\end{enumerate}
The estimates (\ref{p3_m135}) and (\ref{p3_m145}) are sharp.

\end{thm}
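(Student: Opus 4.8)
The plan is to deduce the entire theorem from Theorem~\ref{p3-theorem-001} by means of Jabotinsky's formula (Lemma~\ref{p3-lemma-005}). First I would specialise that lemma to $t=1$. Since $(F(w)/w)^{1}=1+\sum_{m=1}^{\infty}A_{m+1}w^{m}$, the coefficient written $A_m(1,F)$ in Lemma~\ref{p3-lemma-005} is just the ordinary Taylor coefficient $A_{m+1}$ of $F$; hence the relation $A_m(t,F)=\frac{t}{t+m}\,a_m(-(t+m),f)$ taken with $t=1$ and $m=n-1$ yields the exact identity
\begin{equation*}
A_n=\frac{1}{n}\,a_{n-1}(-n,f).
\end{equation*}
This reduces the whole statement to estimating $|a_{n-1}(-n,f)|$, which is precisely the quantity controlled by Theorem~\ref{p3-theorem-001} with parameter $\lambda=n$ and index $l=n-1$.

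Next I would feed $\lambda=n$ into Theorem~\ref{p3-theorem-001}. Because $n$ is a positive integer we have $[\lambda]=n$ and $\lambda-[\lambda]=0$, so $\mathbb{J}(n)=\emptyset$ and the sub-intervals $\mathbb{I}_k(\lambda)$ of (\ref{p3_m005}) collapse to exactly the intervals $\mathbb{I}_k(n)=[k/n,(k+1)/n)$ used in the present statement. Thus only part~(ii) of Theorem~\ref{p3-theorem-001} is relevant, and everything comes down to locating $l=n-1$ relative to the threshold $[\lambda]-k=n-k$ at which the bound switches from (\ref{p3_m030}) to (\ref{p3_m035}). The three regimes of the present theorem correspond to three positions of this threshold: for $k=0,1$ one has $n-1\le n-k$, so (\ref{p3_m030}) applies and, after dividing by $n$, gives (\ref{p3_m135}); for $2\le k\le n-2$ one has $n-1\ge n-k+1$, so (\ref{p3_m035}) applies and gives (\ref{p3_m140}); and for $k=n-1$ the special estimate (\ref{p3_m040}) gives $|a_{n-1}(-n,f)|\le n(A-B)/(n-1)$, whence (\ref{p3_m145}).

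The only point requiring genuine care is the behaviour at the left endpoints $\delta=k/n$. There $\lambda(1-\delta)=n-k\in\mathbb{N}$, which places us in Case~2 of the proof of Theorem~\ref{p3-theorem-001}, where the ranges of validity of (\ref{p3_m030}) and (\ref{p3_m035}) are each shifted up by one unit in $l$; I must check that this shift does not disturb the interval assignment just described. It does not: at $\delta=k/n$ the summand indexed by $n-k$ in the basic inequality (\ref{p3_m070}) carries the vanishing factor $\lambda(1-\delta)-(n-k)=0$, so the estimates produced by (\ref{p3_m030}) and (\ref{p3_m035}) coincide for $l=n-1$; equivalently, the two product bounds (\ref{p3_m135}) and (\ref{p3_m140}) are numerically equal precisely when $\delta=k/n$. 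Hence the bound attached to each half-open interval $\mathbb{I}_k(n)$ is unambiguous, and I expect this endpoint bookkeeping to be the main obstacle, the rest being direct substitution.

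For sharpness I would exploit that the identity $A_n=\frac{1}{n}\,a_{n-1}(-n,f)$ is exact, so any function making $|a_{n-1}(-n,f)|$ extremal in Theorem~\ref{p3-theorem-001} makes $|A_n|$ extremal here. Taking $f=k_{A,B}$ (for $\delta\in\mathbb{I}_0(n)\cup\mathbb{I}_1(n)$), where equality holds in (\ref{p3_m030}), forces equality in (\ref{p3_m135}); taking $f=k_{A,B}^{(n-1)}$ (for $\delta\in\mathbb{I}_{n-1}(n)$), where equality holds in (\ref{p3_m040}) with $l=n-1$, forces equality in (\ref{p3_m145}). This accounts for exactly the two sharpness claims made in the statement and leaves (\ref{p3_m140}) on the interior intervals without an asserted extremal, as expected.
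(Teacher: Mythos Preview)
Your proposal is correct and follows essentially the same route as the paper: both obtain the exact identity $A_n=\tfrac{1}{n}\,a_{n-1}(-n,f)$ and then read off the three cases from Theorem~\ref{p3-theorem-001} with $\lambda=n$, $l=n-1$, using the same extremal functions $k_{A,B}$ and $k_{A,B}^{(n-1)}$ for sharpness. The only cosmetic difference is that the paper obtains the identity $A_n=\tfrac{1}{n}\,a_{n-1}(-n,f)$ directly from the classical contour-integral formula $A_n=\tfrac{1}{2\pi i n}\int_{|z|=r}[f(z)]^{-n}\,dz$ (citing Goodman) rather than by specialising Lemma~\ref{p3-lemma-005} to $t=1$; your endpoint discussion at $\delta=k/n$ is in fact a bit more careful than the paper's, which relegates that observation to a remark after the proof.
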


\begin{proof}
It is well-known that (see e.g. \cite[Vol-I, p. 54]{Goodman-book-1983})
$$
A_n = \frac{1}{2\pi in} \int_{|z|=r} \frac{1}{[f(z)]^n}\, dz = \frac{1}{n}\, a_{n-1}(-n,f)
$$
for $n\ge 2$, where $a_{n-1}(-n,f)$ is defined by (\ref{p3_i015}). In order to estimate $|A_n|$ we shall estimate $|a_{n-1}(-n,f)|$ using Theorem \ref{p3-theorem-001}. We note that the inequality (\ref{p3_m030}) is applicable only for $k=0$ and $k=1$ (in case of $n(1-\delta)\in\mathbb{N}$ the inequality (\ref{p3_m030}) is also applicable for $k=2$). Therefore for $\delta\in \mathbb{I}_0(n) \bigcup \mathbb{I}_1(n)$, the inequality (\ref{p3_m030}) yields
$$
\left|A_n\right|= \frac{1}{n}\left|a_{n-1}(-n,f)\right| \le \frac{1}{n}\prod_{j=0}^{n-2} \frac{n(A-B)+Bj}{j+1}
$$
which is precisely the inequality (\ref{p3_m135}). Similarly, for $\delta\in \mathbb{I}_k(n)$, $k=2,3,\ldots, n-2$, the inequality (\ref{p3_m035}) yields,
$$
\left|A_n\right|= \frac{1}{n}\left|a_{n-1}(-n,f)\right| \le \frac{n-k}{n(n-1)}\prod_{j=0}^{n-k-1} \frac{n(A-B)+Bj}{j+1}
$$
which is precisely the inequality (\ref{p3_m140}). Finally, for $\delta\in \mathbb{I}_{n-1}(n)$, the inequality (\ref{p3_m040}) gives
$$
\left|A_n\right|= \frac{1}{n}\left|a_{n-1}(-n,f)\right| \le \frac{A-B}{n-1}
$$
which is precisely the inequality (\ref{p3_m145}).

Equality holds in the estimate (\ref{p3_m135}) for the function $k_{A,B}(z)$ defined by (\ref{p3_m010}) and equality holds in the estimate (\ref{p3_m145}) for the function $k_{A,B}^{(n-1)}(z)$ defined by (\ref{p3_m015}).

\end{proof}

\begin{rem}
As we mentioned in the proof of Theorem \ref{p3-theorem-005} that the inequality (\ref{p3_m030}) is also applicable for $k=2$ when $n(1-\delta)\in\mathbb{N}$ to estimate $|a_{n-1}(-n,f)|$. Note that the condition $k=2$ and $n(1-\delta)\in\mathbb{N}$ imply that $\delta=2/n$. Hence for $\delta=2/n$, the estimate (\ref{p3_m135}) holds and equality holds for the function $k_{A,B}(z)$. Note that for $\delta=2/n$, the estimates (\ref{p3_m135}) and (\ref{p3_m140}) give the same bound for $|A_n|$.
\end{rem}

The following special cases of Theorem \ref{p3-theorem-005} may be of worth mentioning.
\begin{cor}\label{p3-corollary-015}
Let $f\in\mathcal{S}^*(1,B)$ for some $-1\le B<  1$ and $F(w)$ be its inverse which is valid in some neighborhood of the origin and has the representation $F(w)= w+ \sum_{n=2}^{\infty} A_n w^n.$ Then
\begin{equation}\label{p3_m150}
\left|A_n\right|\le \frac{1}{n}\prod_{j=0}^{n-2} \frac{n(1-B)+Bj}{j+1}.
\end{equation}
The inequality (\ref{p3_m150}) is sharp and equality holds for the function $k_{1,B}(z)$ defined by (\ref{p3_m010}).

\end{cor}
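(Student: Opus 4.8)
The plan is to recognize this corollary as the special case $A=1$ of Theorem \ref{p3-theorem-005}, so essentially no new work is needed beyond verifying which branch of that theorem applies. First I would compute the quantity $\delta$ governing the theorem: with $A=1$ we have $\delta=(1-A)/(1-B)=0$ for every admissible $B\in[-1,1)$. Since the relevant interval is $\mathbb{I}_0(n)=[0,1/n)$ and clearly $0\in[0,1/n)$ for every $n\in\mathbb{N}$, the value $\delta=0$ always lies in $\mathbb{I}_0(n)$, independently of $n$ and $B$. Hence part (i) of Theorem \ref{p3-theorem-005} is the applicable case, and no other branch needs to be considered.

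Next I would simply substitute $A=1$ into the estimate (\ref{p3_m135}). Because $A-B=1-B$ under this specialization, the bound
$$
\left|A_n\right|\le \frac{1}{n}\prod_{j=0}^{n-2} \frac{n(A-B)+Bj}{j+1}
$$
becomes exactly the asserted inequality (\ref{p3_m150}). This is a direct term-by-term identification, so the only point that truly requires checking is the interval membership $\delta=0\in\mathbb{I}_0(n)$ noted above; everything else is a routine substitution.

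Finally, for sharpness I would invoke the extremal function already identified in the proof of Theorem \ref{p3-theorem-005}: equality in (\ref{p3_m135}) is attained by $k_{A,B}(z)$ defined in (\ref{p3_m010}). Setting $A=1$ specializes this extremal function to $k_{1,B}(z)$, which therefore realizes equality in (\ref{p3_m150}). Consequently the bound is sharp, as claimed. There is no genuine obstacle here: the corollary is a straightforward reading of Theorem \ref{p3-theorem-005} at $A=1$, the only substantive observation being that $\delta$ collapses to $0$ and thus falls in the first subinterval $\mathbb{I}_0(n)$ for all $n$.
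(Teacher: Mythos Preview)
Your proposal is correct and matches the paper's approach exactly: the paper presents this corollary as an immediate specialization of Theorem~\ref{p3-theorem-005} (it gives no separate proof), and your argument---setting $A=1$, observing $\delta=0\in\mathbb{I}_0(n)$ for all $n$, and reading off (\ref{p3_m135}) together with its extremal function $k_{A,B}$---is precisely the intended justification.
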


If we choose $B=-1$ in Corollary \ref{p3-corollary-015} we obtain the coefficient estimate for inverses of starlike functions in the class $\mathcal{S}^*$ which has been proved by L\"{o}wner \cite{Lowner-1923} (see also \cite{FitzGerald-1972, Schaeffer-Spencer-1945}). Again, if we choose $B=0$ in Corollary \ref{p3-corollary-015} then we obtain the coefficient estimate for inverses of functions in the class $\mathcal{S}^*(1,0)$. Moreover, if we substitute $B=-1+1/\alpha$ with $\alpha > 1/2$ in Corollary \ref{p3-corollary-015} then we obtain the coefficient estimate for inverses of functions in the class $\mathcal{S}^*(1,-1+1/\alpha)$.
Finally, if we choose $A=1-2\alpha$ and $B=-1$ with $0\le\alpha < 1$ in Theorem \ref{p3-theorem-005} then Theorem \ref{p3-theorem-005} reduces to \cite[Theorem 1] {Kapoor-Mishra-2007} which solves the inverse coefficient problem for the class $\mathcal{S}^*(\alpha)$.

The following theorem gives the coefficient estimates for inverse functions of functions in the class $\Sigma^*(A,B)$.

\begin{thm}\label{p3-theorem-010}
Let $g\in\Sigma^*(A,B)$ be of the form (\ref{p3_i010}) and $G(w)$ be its inverse which is valid in some neighborhood of infinity and has the representation $G(w)= w\left(1+\sum_{n=1}^{\infty} B_nw^{-n}\right)$. Also let $\delta=(1-A)/(1-B)$ and for a fixed $n\in\mathbb{N}$, $\mathbb{I}_k(n)=[\frac{k}{n},\frac{k+1}{n})$, $k=0,1,\ldots, n-1$. Then\\
\begin{enumerate}
\item [(i)] for $-1\le B< A \le 1$ we have
\begin{equation}\label{p3_m155}
\left|B_1\right|\le A-B \quad\mbox{ and }\quad \left|B_2\right|\le (A-B)/2;
\end{equation}

\item [(ii)] for $n\ge 2$ and $\delta\in \mathbb{I}_k(n)$, $k=0,1,\ldots, n-2$ we have
\begin{equation}\label{p3_m160}
\left|B_{n+1}\right|\le \frac{n-k}{n(n+1)}\prod_{j=0}^{n-k-1} \frac{n(A-B)+Bj}{j+1};
\end{equation}

\item [(iii)] for $n\ge 2$ and $\delta\in \mathbb{I}_{n-1}(n)$ we have
\begin{equation}\label{p3_m165}
\left|B_{n+1}\right|\le \frac{A-B}{n+1}.
\end{equation}
\end{enumerate}
The estimates in (\ref{p3_m155}) and (\ref{p3_m165}) are sharp.
\end{thm}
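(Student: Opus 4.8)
The plan is to transport the estimates of Theorem \ref{p3-theorem-001} through the correspondence $g(z)=1/f(1/z)$ between $\Sigma^*(A,B)$ and $\mathcal{S}^*(A,B)$, exactly as the inverse coefficients of $f$ were handled in Theorem \ref{p3-theorem-005}. Given $g\in\Sigma^*(A,B)$ of the form (\ref{p3_i010}), let $f\in\mathcal{S}^*(A,B)$ be the unique function with $g(z)=1/f(1/z)$ and let $F$ be the inverse of $f$. A direct check shows that the inverse of $g$ is $G(w)=1/F(1/w)$, so that $B_{n+1}$ is the coefficient of $w^{-n}$ in $G(w)$. The first task is to express this coefficient as a negative-power Taylor coefficient of $f$, thereby reducing everything to Theorem \ref{p3-theorem-001}.

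The \emph{key identity} is obtained as follows. For $n\ge 1$ and a circle $|w|=R$ with $R$ large I would write
\[
B_{n+1}=\frac{1}{2\pi i}\oint_{|w|=R} G(w)\,w^{n-1}\,dw,
\]
substitute $w=g(z)$ (legitimate since $g$ is univalent on $|z|>1$), use $G(g(z))=z$, and integrate by parts. Because $z\,g(z)^n$ is single-valued in the annulus the exact-differential term drops, leaving
\[
B_{n+1}=-\frac{1}{2\pi i\,n}\oint_{|z|=\rho} g(z)^n\,dz,
\]
so that $B_{n+1}$ is $-1/n$ times the coefficient of $z^{-1}$ in $g(z)^n$. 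Now put $\zeta=1/z$; since $g(z)/z=\zeta/f(\zeta)$ one has $g(z)^n=z^n\,(f(\zeta)/\zeta)^{-n}=z^n\left(1+\sum_{k\ge1}a_k(-n,f)\,z^{-k}\right)$ by (\ref{p3_i015}). Reading off the coefficient of $z^{-1}$, i.e. the term $k=n+1$, gives the identity
\[
|B_{n+1}|=\frac{1}{n}\,\bigl|a_{n+1}(-n,f)\bigr|,\qquad n\ge 1,
\]
which is the meromorphic analogue of the relation $A_n=\tfrac1n a_{n-1}(-n,f)$ used in Theorem \ref{p3-theorem-005}.

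It remains to \emph{apply Theorem \ref{p3-theorem-001} with $\lambda=n$ and $l=n+1$}. Since $\lambda=n$ is a positive integer, $[\lambda]=n$ and $\mathbb{J}(\lambda)=\emptyset$, so for $n\ge 2$ part (ii) applies with the intervals $\mathbb{I}_k(n)=[k/n,(k+1)/n)$. As $l=n+1>n-k=[\lambda]-k$ for every admissible $k$, the relevant bound is always (\ref{p3_m035}), which yields $|a_{n+1}(-n,f)|\le\frac{n-k}{n+1}\prod_{j=0}^{n-k-1}\frac{n(A-B)+Bj}{j+1}$; dividing by $n$ gives (\ref{p3_m160}). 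For $k=n-1$ one instead invokes (\ref{p3_m040}) with $l=n+1$, getting $|a_{n+1}(-n,f)|\le n(A-B)/(n+1)$ and hence (\ref{p3_m165}). The boundary values $\delta=k/n$, where $n(1-\delta)\in\mathbb{N}$, require a separate glance: there (\ref{p3_m030}) extends the first range by one index, but a short computation shows the resulting bound coincides with (\ref{p3_m160}) (at $\delta=0$ the extra factor at $j=n$ equals $n/(n+1)$). For part (i), $B_1$ and $B_2$ sit at $n=0,1$, i.e. $\lambda=1$: the composition $g(G(w))=w$ gives $B_1=-b_1=-a_1(-1,f)$, while the identity above gives $B_2=-a_2(-1,f)$, so Corollary \ref{p3-corollary-005} delivers $|B_1|\le A-B$ and $|B_2|\le(A-B)/2$ for all $-1\le B<A\le1$.

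Finally, sharpness of (\ref{p3_m155}) and (\ref{p3_m165}) is inherited from the sharp cases of Theorem \ref{p3-theorem-001}: the bounds for $B_1,B_2$ come from the sharp estimate (\ref{p3_m025}) and the bound (\ref{p3_m165}) from the sharp estimate (\ref{p3_m040}), so equality is attained for the functions $g(z)=1/f(1/z)$ with $f=k_{A,B}$ (for $B_1$) and $f=k_{A,B}^{(n+1)}$ (for $B_{n+1}$) defined by (\ref{p3_m010}) and (\ref{p3_m015}); note that (\ref{p3_m160}), arising from the non-sharp estimate (\ref{p3_m035}), is correspondingly not asserted to be sharp. I expect the only genuine obstacle to be the second paragraph, namely justifying the contour manipulation together with the index bookkeeping that turns the coefficient of $z^{-1}$ in $g(z)^n$ into $a_{n+1}(-n,f)$, and verifying that the endpoints $\delta=k/n$ do not degrade the stated bounds. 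Once $|B_{n+1}|=\tfrac1n|a_{n+1}(-n,f)|$ is in hand, the remainder is a direct transcription of Theorem \ref{p3-theorem-001}.
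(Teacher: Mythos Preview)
Your proposal is correct and follows the paper's overall strategy: transport $g$ to $f\in\mathcal{S}^*(A,B)$ via $g(z)=1/f(1/z)$, establish the key relation $|B_{n+1}|=\tfrac1n|a_{n+1}(-n,f)|$, and then invoke Theorem~\ref{p3-theorem-001} with $\lambda=n$, $l=n+1$.

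The one genuine difference is in how you obtain that key relation. The paper passes through $B_n=A_n(-1,F)$ and then invokes Jabotinsky's Lemma~\ref{p3-lemma-005} with $t=-1$, which yields $A_n(-1,F)=-\tfrac{1}{n-1}a_n(-(n-1),f)$ for $n\ge2$; the exceptional case $B_1=A_1(-1,F)$ is read off from the expansion of $f'(z)/f(z)$ in (\ref{p3_m130}) and then bounded via $|a_2|\le A-B$. You instead compute $B_{n+1}$ directly as a residue of $g(z)^n$ by the substitution $w=g(z)$ and integration by parts, and handle $B_1$ by the elementary relation $B_1=-b_1=-a_1(-1,f)$ together with Corollary~\ref{p3-corollary-005}. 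Your route is self-contained and avoids the external citation; the paper's route makes the formula drop out of a known identity with no calculation. Both land on exactly the same inequality to which Theorem~\ref{p3-theorem-001} is applied, and your boundary analysis at $\delta=k/n$ (in particular the check that the extra factor at $j=n$ equals $n/(n+1)$ when $\delta=0$) is correct. Note also that your extremal function for (\ref{p3_m165}), namely $f=k_{A,B}^{(n+1)}$, is the right one: the paper's stated $k_{A,B}^{(n)}$ actually gives $a_{n+1}(-n,f)=0$ for $n\ge2$.
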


\begin{proof}
Let $g\in\Sigma^*(A,B)$. Then there exists a unique function $f\in\mathcal{S}^*(A,B)$ such that $g(z)=1/f(1/z)$. Also, it can be easily verified that $G(w)=1/F(1/w)$, where $F(w)$ is the inverse of $f(z)$. Therefore,
\begin{equation}\label{p3_m170}
B_n= A_n(-1,F) \quad \mbox{ for } n\ge 1,
\end{equation}
where $A_n(-1,F)$'s are defined as in Lemma \ref{p3-lemma-005}.
Our first aim is to find the upper bound for $B_1$. Since $f\in\mathcal{S}^*(A,B)$ and $f(z)$ is of the form (\ref{p3_i001}), it is easy to see that
\begin{equation}\label{p3_m175}
\frac{f'(z)}{f(z)}= \frac{1}{z} + a_2+ (2a_3-a_2^2)z +\cdots.
\end{equation}
By comparing the coefficients in (\ref{p3_m130}) and (\ref{p3_m175}), we obtain $A_1(-1,F) = a_2$. Therefore by using the well-known estimate $|a_2|\le A-B$ (see for example \cite[Theorem 1]{Goel-Mehrok-1981}) we obtain
$$
|B_1|= |A_1(-1,F)|=|a_2| \le A-B.
$$

Our next aim is to find the upper bound for $A_n(-1,F)$ for $n\ge2$. From Lemma \ref{p3-lemma-005}, it is easy to see that
\begin{equation}\label{p3_m180}
A_n(-1,F) = - \frac{1}{n-1} a_n(-(n-1),f) \quad \mbox{ for } n\ge2.
\end{equation}
Therefore from (\ref{p3_m170}) and (\ref{p3_m180}) we obtain
\begin{equation}\label{p3_m185}
|B_{n+1}|= \left|A_{n+1}(-1,F)\right|= \frac{1}{n}\left|a_{n+1}(-n,f)\right| \quad\mbox{ for } n\ge 1.
\end{equation}
We note that the inequality (\ref{p3_m030}) in Theorem \ref{p3-theorem-001} is not applicable for any value of $k=0,1,\ldots, n-1$ to estimate $|a_{n+1}(-n,f)|$. Hence from (\ref{p3_m185}) and the inequality (\ref{p3_m035}) it is easy to see that
$$
|B_{n+1}|= \frac{1}{n}\left|a_{n+1}(-n,f)\right| \le \frac{n-k}{n(n+1)}\prod_{j=0}^{n-k-1} \frac{n(A-B)+Bj}{j+1}
$$
for $n\ge2$ and $\delta\in \mathbb{I}_k(n)$, $k=0,1,\ldots, n-2$. Again, from (\ref{p3_m040}) and (\ref{p3_m185}) we obtain
\begin{equation}\label{p3_m190}
|B_{n+1}|= \frac{1}{n}\left|a_{n+1}(-n,f)\right| \le  \frac{A-B}{n+1}
\end{equation}
for $n\ge 1$ and $\delta\in\mathbb{I}_{n-1}(n)$. For $n=1$, the inequality (\ref{p3_m190}) gives the estimate of $|B_2|$ which is precisely the second inequality of (\ref{p3_m155}).

Equality holds in (\ref{p3_m155}) for the function $1/k_{A,B}(1/z)$,  where $k_{A,B}(z)$ is defined by (\ref{p3_m010}) and equality holds in (\ref{p3_m165}) for the function $1/k_{A,B}^{(n)}(1/z)$, $n\ge 2$, where $k_{A,B}^{(n)}(z)$ is defined by (\ref{p3_m015}).

\end{proof}

\noindent\textbf{Acknowledgement:}  The authors thank Prof. S. Ponnusamy for useful discussions and  careful reading the paper. The authors also thank him for bringing the paper \cite{Karunakaran-1976} to their attention. The first author thank University Grants Commission for the financial support through UGC-SRF Fellowship. The second author thank SRIC, IIT Kharagpur for the support.

\end{document}